\def\s#1#2{\langle \,#1 , #2 \,\rangle}
\definecolor{blue}{rgb}{0,0,1}
\definecolor{red}{rgb}{1,0,0}
\definecolor{green}{rgb}{0,1,0}
\newtheorem{defn}{Definition}[section]
\newtheorem{thm}[defn]{Theorem}
\newcommand {\C}{{\mathbb C}}
\newcommand {\G}{{\Gamma}}
\newcommand {\Z}{{\bf Z}}
\newcommand {\R}{{\mathbb R}}
\newcommand {\g}{{\gamma}}
\newcommand {\HH}{{\mathfrak  H}}
\newcommand {\ca}{{\mathbf a}}
\newcommand {\RE}{\text{Re}}
\def\PSL{\operatorname{PSL}}
\def\ca{{\mathfrak a}}
\def\cj{{\mathfrak a_j}}
\def\c1{{\mathfrak a_1}}
\def\cm{{\mathfrak a_m}}
\def\cb{{\mathfrak b}}
\def\ci{{\infty}}
\def\sa{{\sigma_\mathfrak a}}
\def\sj{{{\sigma_{\mathfrak a_j}}}}
\def\s1{{{\sigma_{\mathfrak a_1}}}}
\def\sb{{\sigma_\mathfrak b}}
\title{A new multiple Dirichlet series induced by a higher order form}
\author{
Anton Deitmar\& Nikolaos Diamantis\\ \ \\ 
Acta Arithmetica 142.4, 303-309 (2010)}
\date{}
\begin{document}

\maketitle

\section{Introduction}
This note proposes a new link between two relatively new objects in Number 
Theory, higher-order automorphic forms and multiple Dirichlet series. 

\it First-order automorphic forms \rm of weight $k \in 2{\mathbb Z_+}$ 
for a lattice $\G$ in $\PSL_2(\R)$
are defined as smooth complex-valued 
functions $f$ on the upper-half plane $\HH$ such that 
\begin{itemize}
\item $f|_k(\g-1)$ is a modular form of weight $k$ for $\G$  
\item $f|_k\pi=f$, for every parabolic element of $\G$ 
\item $f$ has a ``moderate growth at the cusps".
\end{itemize}

Here the action $|_k$ of $\PSL_2(\R)$ on 
functions $g:\HH \mapsto \C$ is defined by
$$(g|_k \gamma)(z)=g(\gamma z) (c z+d)^{-k}
$$
with $\gamma= \left ( \smallmatrix  * & * \\  c & d
 \endsmallmatrix \right )$ in $\PSL_2(\R)$.
We extend the action to the group ring $\mathbb C [PSL_2(\mathbb R)]$ by linearity.
This definition can be extended in a natural way to higher-order forms.
(Note that the order in this definition differs from that in the 
definitions given in previous papers in the subject. The reason for 
the modification of terminology is related to the fact that, by Fourier 
transform, $\mathbb Z$-supported tempered distributions are mapped to 
higher order $\mathbb Z$-invariants, where the natural differentiation order of the distributions corresponds to the new notion of order.)

Though some of the ideas behind the investigation of multiple Dirichlet 
series originated earlier, the systematic study
began in the mid-90's (\cite{BFH}, \cite{DGH} etc.) A 
definition of \it multiple Dirichlet series \rm is given in \cite{DGH}:
\begin{equation} 
\sum_{m_1=1}^{\infty}\dots \sum_{m_n=1}^{\infty}
\frac{1}{m_1^{s_1} \dots m_n^{s_n}} \int_0^{\infty} \dots \int_0^{\infty}
\frac{a(m_1, \dots, m_n, t_1, \dots, t_l)}{t_1^{w_1} \dots t_l^{w_l}} d 
t_1 \dots d t_l 
\end{equation} 
where $a(m_1, \dots, m_n, t_1, \dots, t_l)$ is a complex-valued smooth 
function, or, more generally, vectors with entries such series. Among 
these series, those that have a meromorphic 
continuation to the entire $\mathbb C^n$ 
and satisfy enough functional equations 
are of particular interest for applications and are sometimes refered to 
as `perfect'. 
Constructing `perfect' multiple 
Dirichlet series is much harder than the corresponding problem in 
classical Dirichlet series and it is one of the main aims of the theory.
Apart from the multiple Dirichlet series obtained from 
metaplectic Eisenstein series, essentially none of the known `perfect' 
multiple Dirichlet series are constructed as Mellin transforms but by 
other techniques, cf. \cite{BBCFH}, \cite{BBFH}, \cite{DG}, etc. In 
this note, we construct a perfect multiple Dirichlet series as the Mellin 
transform of a 
first-order form. This first-order form is essentially the Eisenstein 
series twisted by modular symbols. Although the resulting double 
Dirichlet series has infinitely many poles, and is thus not as suitable for 
current application as it would have been if it had finitely many poles, 
it is, to our knowledge, the first example of a non-classical modular 
object producing a double Dirichlet series via a Mellin transform. This 
may suggest that there may be a broader class of objects generating 
`perfect' multiple Dirichlet series in a systematic way similar to that 
between modular forms and Dirichlet series.

\bf Acknowledgment. \rm The authors thank Dorian Goldfeld for many 
useful conversations during work on this note.

\section{Eisenstein series twisted by modular symbols}
Let $\G \subset PSL_2(\R)$ be a non-uniform lattice.
As usual we write $x+iy=z\in
\HH$. 
Fix a set $\{\c1, \dots, \cm\}$ of
representatives of the inequivalent cusps of the group $\Gamma$.
For each $\cj$, we consider a scaling matrix $\sj$ such that
$\sj(\ci)=\cj$ and
$$
\sj^{-1} \G_\cj \sj= \G_\ci=
\left\{ \pm \left(\smallmatrix 1 & m \\ 0 & 1
\endsmallmatrix\right)
\; \big | \; \ m\in {\mathbb Z}\right\}
$$
where $\G_\cj$ is the stabilizer of $\cj$ in $\Gamma$.

Let $\psi:\Gamma\to\C$ be a group homomorphism which is zero on all parabolic elements.
For every $k \in 2\mathbb Z_+$ and $\ca\in\{\c1,\dots,\cm\}$, we set
$$
E_{\ca}(z, s, k; \psi):=\sum_{\g \in \G_{\ca} \backslash \G} \psi(\g) 
\text{Im}(\sa^{-1} \g z)^s j(\sigma_\ca^{-1}\gamma,z)^{-k},
$$
where $j(\gamma,z)=cz+d$ when $\gamma=\left(\begin{array}{cc}a & b \\c & 
d\end{array}\right)$.

This series is absolutely convergent for $\RE(s)>2-\frac k2$ and for 
$k=0$ it can be meromorphically continued
to all of $\C$ (\cite{GO}). It further satisfies
\begin{equation}
E_{\ca}(\g z, s, k; \psi)j(\g, z)^{-k}=E_{\ca}(z, s, k;
\psi)+\psi(\g^{-1})E_\ca(z,s,k)
\label{tl}
\end{equation}
where 
$$E_{\ca}(z, s, k)=\sum_{\g \in \G_{\ci} \backslash \G}
\text{Im}(\sigma_\ca^{-1}\g z)^sj(\sigma_\ca^{-1}\g, z)^{-k}$$
is the classical Eisenstein  series at $\ca$.

The function $E_{\ca}(z, s, k; \psi)$ is a weight $k$ 
first-order
automorphic form for each $s$ for which it is defined
(cf. \cite{CDO}, where first order is called second-order according 
to an older convention) and it has been used to obtain information
about the distribution of modular symbols (\cite{G}, \cite{PR}),
the number of appearances of a given generator in reduced words of 
$\Gamma$ etc.

Consider now the lattice
$$\G^*=\langle \G_0(N), W_N \rangle=\G_0(N) \cup \G_0(N)W_N$$
where $\G_0(N)=\{\left ( \smallmatrix  a & b \\  c & d
 \endsmallmatrix \right ) \in SL_2(\Z); N|c\}$ and
 $W_N=\left ( \smallmatrix  0 & -1/\sqrt N \\  \sqrt N & 0
 \endsmallmatrix \right )$. 

We now concentrate on the case $k=0$ for simplicity and because the weight 
does not affect the point we want to make. Let $f$ be a newform of 
weight $2$ for $\G_0(N)$ such that
 $L_f(1)=0$. We set $$\psi(\g)
 =\langle f, \g \rangle:=\int_{i \ci}^{\g i \ci}f(z)dz$$
and $E_{\ca}(z, s; f):=E_{\ca}(z, s; \psi)$.
The Fourier expansion of $E_{\ca}(z, s; f)$ at $\cb$ is
$$E_{\ca}(\sb z, s; f)=\phi_{\ca \cb}(s; f)y^{1-s}+\sum_{n \ne 0}
\phi_{\ca \cb}(n, s; f)W_s(nz)$$
with
$$
\phi_{\ca \cb}(s; f)=\pi \frac{\G(s-\frac{1}{2})}{\G(s)}
\sum_{c \in C_{\ca \cb}}\frac{S_{\ca \cb}(0, 0, f; c)}{c^{2s}}$$
and
$$\phi_{\ca \cb}(n, s; f)=\frac{\pi^s}{\G(s)}|n|^{s-1}
\sum_{c \in C_{\ca}}\frac{S_{\ca \cb}(n, 0, f; c)}{c^{2s}}
$$
where $C_{\ca \cb}=\{c>0; \left ( \smallmatrix  * & * \\  c & *
 \endsmallmatrix \right ) \in \sa^{-1} \G^* \sb \}$ and
 $$S_{\ca \cb}(m, n, f; c)=
 \sum_{\substack{\g \in \G_{\ci} \backslash \sa^{-1} \G^* \sb /\G_{\ci}
\\ \g_c=c}} 
<f, \sa \g \sb^{-1}> e^{2 \pi i (n \frac{\g_{a}}{c}+
 m\frac{\g_d}{c})}.$$
We denote the matrix $(\phi_{\ca \cb}(s; f))$ by $\Phi(s; f)$.

Set
$$\mathbf{E}(z, s; f):=(E_{\c1}(z, s; f), \dots, E_{\cm}(z, s; f))^T 
$$
and
$$\mathbf{E}(z, s):=(E_{\c1}(z, s), \dots, 
E_{\cm}(z, s))^T$$ where $^T$ indicates matrix transpose. 
In (\cite{O'S}), it is proved that $\mathbf{E}(z, s; f)$ can be 
meromorphically continued in $s$ and that, 
for all $s$ for which $\Phi(s, f)$, $\Phi(1-s)$ and $\Phi(s)$ 
are defined,
\begin{equation}
\Phi(s)\mathbf{E}(z, 1-s; f)=\mathbf{E}(z, s; f)-\Phi(s, f)
\Phi(1-s)\mathbf{E}(z, s).
\label{f.e.}
\end{equation}
$\Phi(s)=(\phi_{\ca \cb}(s))$ is the scattering matrix of the standard 
Eisenstein 
series and satisfies
\begin{equation}
\Phi(s) \Phi(1-s; f)=-\Phi(s, f) \Phi(1-s)
\label{c.c.}
\end{equation}

\section{$L$-functions of $\mathbf E$}
It is possible to use (\ref{c.c.}) to obtain a standard functional 
equation in $s$ 
for $\mathbf{E}(z, s; f)$ (i.e. not ``shifted" by a multiple of $\mathbf 
E(z, s)$). Set 
$$\tilde{\mathbf{E}}(z, s; f):=
\mathbf{E}(z, s; f)
-\frac{1}{2}\Phi(s, f) \Phi(1-s)\mathbf{E}(z, s).
$$
Then, using the functional equation 
$\Phi(s)\mathbf{E}(z, 1-s)=\mathbf{E}(z, s)$, we obtain
\begin{multline} \Phi(s) \tilde{\mathbf{E}}(z, 1-s; f)
=\Phi(s)\mathbf{E}(z, 1-s; f)          
-\frac{1}{2}\Phi(s)\Phi(1-s, f) \Phi(s)\mathbf{E}(z, 1-s) \\
=\mathbf{E}(z, s; f)-\Phi(s, f)\Phi(1-s) \mathbf{E}(z, s)          
-\frac{1}{2}\Phi(s)\Phi(1-s, f) \Phi(s)\mathbf{E}(z, 1-s) \\
=\mathbf{E}(z, s; f)-\Phi(s, f)\Phi(1-s) \mathbf{E}(z, s)          
+\frac{1}{2}\Phi(s, f)\Phi(1-s) \mathbf{E}(z, s)=
\tilde{\mathbf{E}}(z, s; f).
\label{f.e.w}
\end{multline}

$\tilde{\mathbf{E}}(z, s; f)$ has infinitely many poles at 
Re$(s)=\frac{1}{2}$ and, possibly, finitely many poles in $(1/2, 1)$
because $\mathbf{E}(z, s)$ has no poles on Re$(s)=\frac{1}{2}$.

In terms of $z$, (\ref{tl}) and $L_f(1)=0$ imply that 
\begin{equation*}
\mathbf{E}(W_N z, s; f)=
\mathbf{E}(z, s; f)+\langle f, W_N \rangle \mathbf{E}(z, s)=
\mathbf{E}(z, s; f)
\end{equation*}
and thus 
\begin{equation}
\tilde{\mathbf{E}}(W_N z, s; f)=\tilde{\mathbf{E}}(z, s; f).
\label{tl1}
\end{equation}

We next define the ``completed" $L$-function of $\tilde{E}_{\ca}$. For 
every $w$ for which $\tilde{E}_{\ca}(iy, w; f)$ is defined and for 
Re$(s)>$max$(1+w, 2-w)$, set
 $$\tilde{\Lambda}_{\ca}(s, w)=\int_0^{\ci} \left 
( \tilde{E}_{\ca}(iy, w; f)-a_{\ca}(w)y^w-b_{\ca}(w)y^{1-w}\right ) y^s 
\frac{dy}{y}$$
where $a_{\ca}(w)y^w+b_{\ca}(w)y^{1-w}$ is the constant term of 
$\tilde{E}_{\ca}(z, w; f)$. We also set  
$$\tilde{\mathbf{\Lambda}}:=(\tilde{\Lambda}_{\c1}, \dots, 
\tilde{\Lambda}_{\cm}).$$
We shall prove 
\begin{thm} The function $\tilde{\mathbf{\Lambda}}(s, w)$
is a (vector-valued) double Dirichlet series which can be meromorphically 
continued to $\mathbb C^2$. If $\tilde{\mathbf{E}}(z, w; f)$ does not have 
a pole at $w_0$, then $\tilde{\mathbf{\Lambda}}(s, w)$ has a pole at $s= 
\pm w_0,$ and $s=\pm(w_0-1)$. It further satisfies the functional 
equations
\begin{equation*}
N^s\tilde{\mathbf{\Lambda}}(s, w)=
\tilde{\mathbf{\Lambda}}(-s, w).
\end{equation*}
and
\begin{equation*}
\Phi(w)\tilde{\mathbf{\Lambda}}(s, 1-w)=\tilde{\mathbf{\Lambda}}(s, w)
\end{equation*}
\end{thm}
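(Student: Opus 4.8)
The plan is to treat $\tilde\Lambda_\ca(s,w)$ as a regularized Mellin transform of $\tilde E_\ca(iy,w;f)$ along the imaginary axis and to read off both its Dirichlet-series shape and its analytic structure from the Fourier expansion at the cusp together with the two symmetries already available, the $W_N$-invariance (\ref{tl1}) and the functional equation (\ref{f.e.w}). I would first establish the double Dirichlet series representation. Substituting $z=iy$ into the Fourier expansion of $\tilde E_\ca$ and subtracting the constant term $a_\ca(w)y^w+b_\ca(w)y^{1-w}$ leaves a sum over $n\neq 0$ of the Fourier coefficients against the Whittaker function, which on the imaginary axis is a constant multiple of $\sqrt{y}\,K_{w-1/2}(2\pi|n|y)$. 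Interchanging summation and integration in the region of absolute convergence and applying the standard Mellin transform of the $K$-Bessel function, each term contributes a product of two Gamma factors times $|n|^{-s-1/2}$. Since $\phi_{\ca\cb}(n,w;f)$ already carries the factor $|n|^{w-1}$ and a Dirichlet series in $w$ arising from the sum over $c$ of the twisted sums $S_{\ca\cb}(n,0,f;c)c^{-2w}$, the outcome is precisely a series of the shape displayed in the introduction: a Dirichlet series in $s$ (the sum over $n$) whose coefficients are themselves Dirichlet series in $w$ (the sum over $c$). This identifies $\tilde{\mathbf\Lambda}$ as a vector-valued double Dirichlet series on the tube where both series converge.

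Next I would obtain the continuation in $s$ together with the pole locations. On the imaginary axis the involution induced by $W_N$ is $y\mapsto 1/(Ny)$, with fixed point $y_0=1/\sqrt N$, and since $W_N\in\G^*$, (\ref{tl1}) gives $\tilde E_\ca(i/(Ny),w;f)=\tilde E_\ca(iy,w;f)$ component-wise. Splitting the defining integral at $y_0$ and folding the part over $(0,y_0)$ onto $(y_0,\infty)$ through this involution, the exponentially decaying tail (function minus constant term) produces two integrals that are entire in $s$, while the residual powers of the constant term assemble into the explicit elementary term
\begin{equation*}
P_\ca(s,w)=a_\ca(w)\,N^{(s-w)/2}\,\frac{2w}{s^2-w^2}
+b_\ca(w)\,N^{(s+w-1)/2}\,\frac{2(1-w)}{s^2-(w-1)^2}.
\end{equation*}
This exhibits the continuation to all $s\in\C$ and shows that, for $w=w_0$ a regular value of $\tilde E$, the simple poles sit exactly at $s=\pm w_0$ (with residue proportional to $a_\ca(w_0)$) and $s=\pm(w_0-1)$ (with residue proportional to $b_\ca(w_0)$). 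Joint meromorphy on $\C^2$ then follows because the entire-in-$s$ integrals depend holomorphically on $w$ wherever $\tilde E(iy,w;f)$ does, using its continuation from \cite{O'S}, while $P_\ca$ is manifestly meromorphic in $w$.

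The two functional equations fall out of the two symmetries. Writing the folded expression as $\tilde\Lambda_\ca(s,w)=A_\ca(s,w)+N^{-s}A_\ca(-s,w)+N^{-s}P_\ca(s,w)$, where $A_\ca(s,w)=\int_{y_0}^{\infty}(\tilde E_\ca(iy,w;f)-a_\ca(w)y^w-b_\ca(w)y^{1-w})\,y^{s}\,\frac{dy}{y}$, the identity $N^s\tilde{\mathbf\Lambda}(s,w)=\tilde{\mathbf\Lambda}(-s,w)$ reduces, after the two entire integrals cancel, to the elementary check $P_\ca(s,w)=N^sP_\ca(-s,w)$, which holds term by term. For $\Phi(w)\tilde{\mathbf\Lambda}(s,1-w)=\tilde{\mathbf\Lambda}(s,w)$ I would apply (\ref{f.e.w}) with $s$ replaced by $w$, namely $\Phi(w)\tilde{\mathbf E}(z,1-w;f)=\tilde{\mathbf E}(z,w;f)$; taking constant terms of this identity gives the matching relation between the constant-term vectors at $w$ and at $1-w$, and since $\Phi(w)$ is independent of $y$ and $s$ it passes through the Mellin integral, yielding the claim.

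The main obstacle I expect is not the formal manipulation but the analytic bookkeeping behind the joint continuation to $\C^2$: justifying the interchange of sum and integral and the convergence of the double series in an initial tube domain, and proving that the tail integrals $A_\ca(\pm s,w)$ are holomorphic in $w$ locally uniformly in $s$, so that the elementary continuation in $s$ and the automorphic continuation in $w$ from \cite{O'S} are genuinely compatible. A secondary point is confirming that $a_\ca(w_0)$ and $b_\ca(w_0)$ do not vanish, so that the asserted poles are actually present rather than merely permitted.
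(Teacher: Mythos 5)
Your proposal follows essentially the same route as the paper: folding the Mellin integral at $y=1/\sqrt N$ via the $W_N$-invariance (\ref{tl1}) to produce an entire-in-$s$ integral plus the explicit rational terms (your $N^{-s}P_{\ca}(s,w)$ agrees with the paper's four-term expression), reading off the poles and the relation $N^s\tilde{\mathbf{\Lambda}}(s,w)=\tilde{\mathbf{\Lambda}}(-s,w)$ from them, and deriving the $w$-functional equation by applying (\ref{f.e.w}) to the constant terms. Your closing caveats --- justifying the interchange of sum and integral for the double Dirichlet series shape, and the possible vanishing of $a_{\ca}(w_0)$, $b_{\ca}(w_0)$ at the claimed poles --- are legitimate points that the paper itself leaves implicit.
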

\begin{proof}
(\ref{tl1}) implies 
that \begin{multline*} \tilde{\Lambda}_{\ca}(s, w)=
\int_{1/\sqrt N}^{\ci} \left (\tilde{E}_{\ca}(iy, w; 
f)-a_{\ca}(w)y^w-b_{\cb}(w)y^{1-w}\right ) y^s
\frac{dy}{y}+\\
\int_{1/\sqrt N}^{\ci} \left (\tilde{E}_{\ca}(i/(Ny), w; f)-
a_{\ca}(w)(Ny)^{-w}-b_{\cb}(w)(Ny)^{w-1}\right ) (Ny)^{-s}\frac{dy}{y}
=\\
\int_{1/\sqrt N}^{\ci} \left (\tilde{E}_{\ca}(iy, w; 
f)-a_{\ca}(w)y^w-b_{\cb}(w)y^{1-w}\right ) (y^s+N^{-s} y^{-s})
\frac{dy}{y}+
\\
N^{-s}\int_{1/\sqrt{N}}^{\ci} \left (
a_{\ca}(w)y^{w}+b_{\ca}(w)y^{1-w}-
a_{\ca}(w)(Ny)^{-w}-b_{\ca}(w)(Ny)^{w-1} \right ) 
y^{-s} \frac{dy}{y}=\\
\\
\int_{1/\sqrt N}^{\ci} \left (\tilde{E}_{\ca}(iy, w; 
f)-a_{\ca}(w)y^w-b_{\cb}(w)y^{1-w}\right ) (y^s+N^{-s} y^{-s})
\frac{dy}{y}+\\
\frac{a_{\ca}(w) (\sqrt N)^{-s-w}}{s-w}+
\frac{b_{\ca}(w) (\sqrt N)^{w-1-s}}{s+w-1}-
\frac{a_{\ca}(w) (\sqrt N)^{-s-w}}{s+w}-
\frac{b_{\ca}(w)(\sqrt N)^{-s+w-1}}{s-w+1}
\end{multline*}
Since for every $w$ for which 
$\tilde{E}_{\ca}(iy, w; f)$ is defined, $E_{\ca}(iy, 
w; f)-$constant term$=O(e^{-\pi y})$, and the analogous fact holds for 
$E_{\ca}(z, w)$, the last integral is 
well-defined and gives a holomorphic function in $s$. 
This shows that $\tilde{\Lambda}_{\ca}$ can be meromorphically continued 
to $\C^2$ with poles at 
$s=\pm w$ and $w-1=\pm s$, and that
$N^s\tilde{\Lambda}_{\ca}(s, w)=\tilde{\Lambda}_{\ca}(-s, w).$

Further, since the vector of constant terms of the entries of 
$\tilde{\mathbf{E}}(z, s; f)$ satisfies the same functional equation in 
$s$ as $\tilde{\mathbf{E}}(z, s; f)$ (that is, (\ref{f.e.w})), we 
immediately deduce that 
$\tilde{\mathbf{\Lambda}}(s, w)$ satisfies
\begin{equation}
\Phi(w)\tilde{\mathbf{\Lambda}}(s, 1-w)=\tilde{\mathbf{\Lambda}}(s, w)
\label{f.e.w.l}
\end{equation}

Finally, by the formulas for the Fourier coefficients of 
$E_{\ca}(z, s; f)$, $E_{\ca}(z, s)$,
and for the functions $\phi_{\ca \cb}(s), \phi_{\ca \cb}(s, f)$
we observe that $\tilde{\Lambda}_{\ca}(s, w)$ is a 
double 
Dirichlet series according the definition in the introduction.

Then the functional equations just proved imply the result.
\end{proof}

\bf Remark. \rm Incidentally, the fact that $\tilde{\mathbf{E}}(z, w; 
f)$, and thus $\tilde{\mathbf{\Lambda}}(s, w)$, has infinitely many poles 
in $w$ (\cite{O'S0})
shows that $\tilde{\mathbf{\Lambda}}$ is a `genuine' double 
Dirichlet 
series and not a finite sum of products of (one-variable) $L$-functions of 
classical modular forms.

\small
\begin{tabular}{ll}
Anton Deitmar & Nikolaos Diamantis\\
Mathematisches Institut & School of Mathematical Sciences\\
Auf der Morgenstelle 10 & University of Nottingham\\
72076 T\"ubingen & University Park\\
Germany & Nottingham NG7 2RD\\
{\tt deitmar@uni-tuebingen.de}& United Kingdom\\
& {\tt nikolaos.diamantis@maths.nottingham.ac.uk}
\end{tabular}

\end{document}